

\documentclass[11pt]{article} 

\usepackage[utf8]{inputenc} 

\usepackage[margin=1in]{geometry}%
\usepackage{graphicx}%
\usepackage{multirow}%
\usepackage{amsmath,amssymb,amsfonts}%
\usepackage{amsthm}%
\usepackage{mathrsfs}%
\usepackage[title]{appendix}%
\usepackage{xcolor}%
\usepackage{textcomp}%
\usepackage{manyfoot}%
\usepackage{booktabs}%
\usepackage{algorithm}%
\usepackage{algorithmicx}%
\usepackage{algpseudocode}%
\usepackage{listings}%
\usepackage{hyperref}
\usepackage[capitalize]{cleveref}


\def\R{\mathbb{R}}

\theoremstyle{thmstyleone}%
\newtheorem{theorem}{Theorem}
%
\newtheorem{lemma}[theorem]{Lemma}%

\theoremstyle{thmstyletwo}%
\newtheorem{remark}{Remark}%

\theoremstyle{thmstylethree}%

\title{Polynomial Inequalities and Optimal Stability of Numerical Integrators}

\author{Luke Shaw\footnote{Departament de Matemàtiques, Universitat Jaume I and ironArray SLU, {C/ Tirant Lo Blanc 6}, {Castell\'o de la Plana}, {12100}, {Castell\'o}, {Spain}} \\ Email: luke.shaw@ironarray.io}
\begin{document}
\maketitle

\abstract{A numerical integrator for $\dot{x}=f(x)$ is called \emph{stable} if, when applied to the 1D Dahlquist test equation $\dot{x}=\lambda x,\lambda\in\mathbb{C}$ with fixed timestep $h>0$, the numerical solution remains bounded as the number of steps tends to infinity. It is well known that no explicit integrator may remain stable beyond certain limits, depending on the domain of $\lambda$. Furthermore, these stability limits are only tight for certain specific integrators (different for each domain), which may then be called `optimally stable'. 
Such optimal stability results are typically proven using sophisticated techniques from complex analysis, leading to rather abstruse proofs. In this article, we pursue an alternative approach, exploiting connections with the Bernstein and Markov brothers inequalities for polynomials. This simplifies the proofs greatly and, moreover, offers a simple framework which unifies the diverse results that have been obtained.}

\section{Introduction}
For a very wide class of explicit numerical integrators for ordinary differential equations (ODEs) of the form $\dot{x}=f(x)$,  optimal stability limits for the Dahlquist test equation $\dot{x}=\lambda x$ have been proven in \cite{Jeltsch1981}. These limits, for an integrator using $m$ evaluations of $f(x)$ per step, correspond to the three cases of interest: 1) $|1+h\lambda/m|\leq1$ for arbitrary $\lambda\in\mathbb{C},\mathrm{Re}\{\lambda\}\leq0$;  2) $-2m^2\leq h\lambda\leq 0$ for $\lambda\in\mathbb{R}_-$; and 3) $-im\leq h\lambda\leq im$ for $\lambda\in i\mathbb{R}$. Proving such general results requires the use of complex analysis, which unfortunately encumbers the exposition.

However, if one considers a slightly less general family of integrators (still including all explicit Runge-Kutta methods), the stability analysis may be conducted via the analysis of polynomials. In this context, it has long been common to find the corresponding `optimal polynomials' simply stated as such, without proof of their optimality \cite{Guillou1960,Kinnmark1984,Sonneveld1985}. Such terseness also proves frustrating for researchers seeking a more pedagogical approach to this important special case.

The present article also restricts attention to integrators which may be analysed via considering polynomials, and we may thus avoid using complex analysis. We then detail extremely simple, \emph{expository} proofs which elucidate connections between the `optimal polynomials' and the Bernstein \cite{Malik1985} and Markov Brothers' inequalities \cite[pp. 89--91]{CheneyBook}. We are thus able to collect and unify results from multiple sources for the three cases of interest, depending on $\lambda$,  within the same framework, offering a resource of pedagogical value which is lacking in the current literature.

Our proofs, while less general, still apply to a great many integrators of practical interest, including all (explicit) Runge-Kutta methods. This is demonstrated by the fact that all theorems in this work offer simplified proofs of published results: \cref{thm:Euler} proves the result of \cite{Jeltsch1978} for explicit Runge-Kutta methods; \cref{thm:Verlet} (appropriately adapted) proves the result of \cite{Chawla1981}; and \cref{thm:Hyperbolic} unifies and clarifies the partial results of \cite{Kinnmark1984,Sonneveld1985,Vichnevetsky1983} and \cite[Sec. 4.3]{VanDerHouwenBook}.

\paragraph{Numerical Integration}
For the problem $f:D\subset\R^d\to\R^d$,
\begin{equation}
\frac{dx}{dt}\equiv\dot{x}=f(x),\quad x(0)=x_0, \quad t\in[0,T], \label{eq:ODE}
\end{equation}
with exact solution flow $\phi_t:\R^d\to\R^d$ one may design numerical integrators which are discrete maps that furnish approximations $\{x_n\}_{n=1}^N$ to the exact solution $x(t)$, $x_{n}\approx x(nh)$ at grid points $t_n=nh,n=1,\ldots N$ such that $t_N=T=Nh$. One such class of integrators are so-called (explicit) `one-step maps' (including all explicit Runge-Kutta methods) of the form
\begin{equation}\label{eq:OneStep}
x_{n+1}=\psi_h(x_n),\quad x_0=x(0),
\end{equation}
which generate the approximations at the grid points in an iterative fashion. Such an integrator is called \emph{consistent} if in the limit $h\to0$ $\| \psi_h(x)-\phi_h(x)\|=\mathcal{O}(h^{p+1}),\forall x\in\R^d$ with $p\geq 1$, which is a minimal condition typically required of any numerical integrator to be considered a `good' approximator. Consistency however, gives little clue as to the behaviour of the integrator for moderate timesteps, which may be more relevant in certain applications. The most important aspect of this behaviour is \emph{stability}.  Numerical instability refers to related undesirable phenomena which manifests when the integrator is operated with stepsizes $h$ which are too large: magnification of the rounding errors that occur in finite arithmetic; inordinate divergence from the exact solution; or failure to match the qualitative behaviour of the continuous dynamics \cref{eq:ODE} near fixed points. To attempt to understand these related issues, one considers the behaviour of the integrator via the aforementioned Dahlquist test problem.

\paragraph{Justification of the Dahlquist test problem}
We consider the relevance of the test problem to the one of the aforementioned phenomena: the magnification of rounding errors.  Denote the numerical solution implemented in exact arithmetic, $x_n$ and the computer implementation (with roundoff errors) $\widetilde{x}_n$. Then one has that the error $\widetilde{E}_n\equiv x_n- \widetilde{x}_n$ due to roundoff evolved under the exact flow follows
\begin{equation}\label{eq:RoundoffError}
\frac{d}{dt}\widetilde{E}_{n}=f(x_n)-f(\widetilde{x}_n)\approx \mathrm{\textbf{D}}(x_n-\widetilde{x}_n)= \mathrm{\textbf{D}}\widetilde{E}_{n},\quad \mathrm{\textbf{D}}\equiv\left.\frac{df}{dx}\right|_{x_n},
\end{equation}
where the approximation is `good' for $x_n$ close to $\widetilde{x}_n$ (i.e. small roundoff error, as one would expect). Consequently one has stability with respect to perturbations due to rounding errors for the exact flow if the eigenvalues of $\mathrm{\textbf{D}}$ all have non-positive real part. It is then desirable if the evolution under the numerical map $\psi_h$ applied to the linearised flow \cref{eq:RoundoffError} respects this, i.e. $\|\psi_h(\widetilde{E}_n)\|\leq\|\widetilde{E}_n\|$. If $\mathrm{\textbf{D}}$ is diagonalisable, \cref{eq:RoundoffError} reduces to $d$ copies of $\dot{E}=\lambda E, \lambda\in\mathbb{C}$ which is none other than the now standard test for (A-)stability (due to Dahlquist)
\begin{equation}\label{eq:Dahlquist}
\dot{x}=\lambda x,\quad x(0)=1,\qquad \{\lambda\in\mathbb{C}:\mathrm{Re}\{\lambda\}\leq0\}.
\end{equation}
Since diagonalisation commutes with almost all methods of practical interest, one may thus study stability via \cref{eq:Dahlquist} without loss of generality.

\paragraph{Setup} We consider consistent, explicit numerical integrators $\psi_h:\R^d\to\R^d$  applied with constant timestep $h$, which, when applied to \cref{eq:Dahlquist} give an iterative solution of the form
\begin{equation}\label{eq:StabPoly}
x_{n+1}=P(h\lambda)x_n,
\end{equation}
where $P(z)$ is a polynomial of degree $\leq m$ in $z$ with $P(z)=1+z+\mathcal{O}(z^2)$. This is the case for e.g. any consistent $m$-stage explicit Runge-Kutta (RK) method. In general a method using $m$ evaluations of $f$ (such as an $m$-stage RK method) gives a polynomial of degree $m$.  
Since a method using $m$ evaluations of $f$ has the same work as a method using 1 evaluation applied $m$ times with stepsize $h/m$, in the following we always compare methods of equivalent cost $m$ (i.e. polynomials of degree $m$) \cite{Jeltsch1981}.

It is then apparent from \cref{eq:StabPoly} that one avoids exponential instability (i.e exponential growth of the error $x_n$) if $\lvert P(h\lambda)\rvert\leq1$. One then calls the \emph{stability domain} of the method with \emph{stability polynomial} $P(z)$ the region
\begin{equation}\label{eq:StabilityDomain}
\mathcal{S}=\{z\in\mathbb{C}: |P(z)|\leq 1\}.
\end{equation}
For example, the Explicit Euler method for \cref{eq:ODE}, $x_{n+1}=x_n+hf(x_n)$, applied to the test problem \cref{eq:Dahlquist} gives $x_{n+1}=(1+h\lambda)x_n,$
and so the stability polynomial of (Explicit) Euler is $P_{Euler}(z)=1+z$, with stability domain the complex disc
$
\mathcal{D}_1=\{z\in\mathbb{C}: |1+z|\leq1\}.
$

Note that, on the other hand, the Implicit Euler method $x_{n+1}=x_{n}+hf(x_{n+1})$, applied to the test problem \cref{eq:Dahlquist} gives $x_{n+1}=(1-h\lambda)^{-1}x_n,$ which is stable $\forall\lambda\in\mathbb{C},\mathrm{Re}\{\lambda\}\leq0$. It is in fact the case that unlimited stability of this sort can only be possessed by implicit methods. However, implicit methods are more costly than explicit methods, and may be unsuitable for various reasons (e.g. high dimension), so that the study of the (limited) stability of explicit methods is eminently worthwhile. Given that their stability is limited, it is of interest to ask what are the ``optimal stability limits'' of explicit integrators which give rise to \cref{eq:StabPoly}, and which numerical integrators attain these limits. These limits also depend on the problem at hand, which induces certain restrictions on $\lambda$. {Namely, for integrators which give rise to stability polynomials of degree $m$ in $(\lambda h)$:}
\begin{itemize}
\item for general problems, $\lambda\in\mathbb{C},\mathrm{Re}\{\lambda\}\leq0$, the optimal stability region is $\mathcal{D}_m=\{\lambda h\in\mathbb{C}: |1+\lambda h/m|\leq 1\}$(\cref{thm:Euler});
\item for `parabolic problems' $\lambda\in\mathbb{R},\lambda\leq0$,  the optimal stability region is $-2m^2\leq\lambda h\leq0$ (\cref{thm:Verlet});
\item for `hyperbolic problems' $\lambda\in i\mathbb{R}$, the optimal stability region is $-i(m-1)\leq\lambda h\leq i(m-1)$ (\cref{thm:Hyperbolic}).
\end{itemize}
The names parabolic and hyperbolic come from the partial differential equations (PDE) context, in which upon discretising in space a parabolic (hyperbolic) PDE, one arrives at an equation of the form \cref{eq:ODE} with Jacobian having negative (pure imaginary) spectrum \cite[Sec. 1.4]{VanDerHouwenBook}. Note also that in the general and parabolic cases, the optimal limits are not enhanced by enlarging the family of integrators beyond those which give rise to \cref{eq:StabPoly}; only in the case of hyperbolic problems does one see improvement, to $-im\leq\lambda h\leq im$ (for a multistep method) \cite{Jeltsch1981}.
\section{General Theorem}
Two of the most prominent families of general numerical integrators  for \cref{eq:ODE} are Runge-Kutta methods (which are `one-step' in the sense of \cref{eq:OneStep}) and (linear) multi-step methods. We explicitly do not consider multi-step methods, since they do not give rise to maps of the form \cref{eq:StabPoly} when applied to \cref{eq:Dahlquist}. However, the result below in \cref{thm:Euler}, that applies to all Runge-Kutta methods is already quite strong, and was first shown in \cite{Jeltsch1978}, although in a less direct way.

\begin{theorem}[Optimal Stability - General]\label{thm:Euler}
Consider a consistent, explicit integrator for \cref{eq:ODE} which, when applied to \cref{eq:Dahlquist} generates a map of the form of \cref{eq:StabPoly}. Then its stability domain $\mathcal{S}_m$ contains the complex disc
$$
\mathcal{D}_m=\{z\in\mathbb{C}: |1+z/m|\leq1\},
$$
iff $P(z)=(1+z/m)^m$. This may be obtained by simply applying $m$ Explicit Euler steps in succession with stepsize $h/m$, i.e. $P_{mEuler}(z)=(1+z/m)^m$.
\end{theorem}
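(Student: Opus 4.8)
The plan is to transfer the question to the closed unit disc and then apply Bernstein's inequality together with its equality case. First I would make the affine substitution $w = 1 + z/m$, equivalently $z = m(w-1)$, which carries $\mathcal{D}_m$ bijectively onto $\overline{\mathbb{D}} := \{w\in\mathbb{C}:|w|\le1\}$, and set $Q(w) := P\bigl(m(w-1)\bigr)$. Then $Q$ is a polynomial of degree $\le m$, and $\mathcal{D}_m\subseteq\mathcal{S}_m$ holds precisely when $|Q(w)|\le1$ for every $w\in\overline{\mathbb{D}}$. Differentiating the substitution, the consistency hypothesis $P(z)=1+z+\mathcal{O}(z^2)$ becomes the pair of normalisations $Q(1)=P(0)=1$ and $Q'(1)=m\,P'(0)=m$.

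For the nontrivial direction, assume $\mathcal{D}_m\subseteq\mathcal{S}_m$, so that $\max_{|w|\le1}|Q(w)|\le1$; since $Q(1)=1$, this maximum equals $1$. Bernstein's inequality \cite{Malik1985} then gives $\max_{|w|=1}|Q'(w)|\le(\deg Q)\max_{|w|\le1}|Q(w)|\le\deg Q\le m$. But $|Q'(1)|=m$ and $w=1$ sits on the unit circle, so this whole chain must collapse to equalities: in particular $\deg Q=m$ and $Q$ attains equality in Bernstein's inequality at the boundary point $w=1$. I would then invoke the equality characterisation of Bernstein's inequality — namely that $|Q'(w_0)|=(\deg Q)\max_{|w|\le1}|Q(w)|$ for some $|w_0|=1$ forces $Q$ to be a scalar multiple of $w^{\deg Q}$ — to conclude $Q(w)=\lambda w^m$ with $|\lambda|=1$; the normalisation $Q(1)=1$ fixes $\lambda=1$, hence $Q(w)=w^m$ and $P(z)=(1+z/m)^m$.

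The converse and the closing assertion are immediate: if $P(z)=(1+z/m)^m$ then $|P(z)|=|1+z/m|^m\le1$ throughout $\mathcal{D}_m$, so $\mathcal{D}_m\subseteq\mathcal{S}_m$; and $(1+z/m)^m$ is exactly the stability polynomial obtained by composing $m$ Explicit Euler substeps of size $h/m$. The only genuine work is therefore the application of the sharp form of Bernstein's inequality — I expect its equality case to be the main obstacle, since the plain inequality only yields $\deg Q = m$ and one needs the extremal characterisation to pin down the polynomial itself; everything else is bookkeeping with the consistency normalisations $Q(1)=1$, $Q'(1)=m$.
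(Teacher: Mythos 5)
Your proposal is correct and follows essentially the same route as the paper's own proof: the affine change of variables $\mu=1+z/m$, the consistency normalisations $\widetilde{P}(1)=1$, $\widetilde{P}'(1)=m$, and the equality case of Bernstein's inequality forcing $\widetilde{P}(\mu)=\mu^m$. Your explicit handling of the unimodular scalar (fixed by $Q(1)=1$) is a slightly more careful rendering of the same argument.
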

\begin{proof}
Since the stability polynomial $P(z)=1+z+\mathcal{O}(z^2)$ of degree $\leq m$ in $z$, one has
\begin{equation}\label{eq:ConsistencyRestricionRK}
P(0)=P'(0)=1.
\end{equation}
Consider now the bijective transformation $\mu=1+z/m$, which maps $\mathcal{D}_m$ to $\mathcal{D}_1$, so that $P(z)=\widetilde{P}(\mu)$. Since $\widetilde{P}$ is a polynomial, it is analytic (i.e. complex differentiable everywhere), as is the transformation $\mu$, and so we can differentiate and apply the chain rule safely, giving
$$
\frac{d}{dz}P(z)=\frac{d}{dz}\widetilde{P}(\mu)=\widetilde{P}'(\mu)\frac{d\mu}{dz}=\widetilde{P}'(\mu)/m.
$$
Since $P'(z=0)=1$, this implies that $\widetilde{P}'(\mu=1)=m$, which holds for any stability polynomial. We may then apply the Bernstein inequality \cite{Malik1985} (suggested on \cite[p.76]{Jeltsch1981})
\begin{equation}\label{eq:Bernstein}\tag{Bernstein}
 \max_{\vert \mu\vert=1}\vert \widetilde{P}'(\mu)\vert\leq m \max_{\vert \mu\vert=1}\vert \widetilde{P}(\mu)\vert,
\end{equation}
and insert the requirement that $\vert \widetilde{P}(\mu)\vert\leq1$ for stability, and since we know that for $\mu=1$ the derivative attains the value $m$, $\max_{\lvert \mu\rvert =1}\vert \widetilde{P}'(\mu)\vert\geq m$, giving
$$
m\leq\max_{\vert \mu\vert=1}\vert \widetilde{P}'(\mu)\vert\leq m \max_{\vert \mu\vert=1}\vert \widetilde{P}(\mu)\vert\leq m,
$$
and so one has equality $\max_{\vert \mu\vert=1}\vert \widetilde{P}'(\mu)\vert= m \max_{\vert \mu\vert=1}\vert \widetilde{P}(\mu)\vert$, which holds only in the case of the Bernstein polynomial $\widetilde{P}(\mu)=\mu^m$, i.e. $P(z)=(1+z/m)^m$ \cite[Sec. 2]{Malik1985}.
\end{proof}

\begin{remark}
The optimal stability of the Euler method, shown by  \cref{thm:Euler}, does in fact hold even when one allows competitor multi-step methods and other numerical integrators that do not give rise to \cref{eq:StabPoly}, although the proof requires intricate complex analysis \cite[Sec. 3]{Jeltsch1981}.
\end{remark}

\section{Parabolic Theorem}\label{sec:parabolic}
For `parabolic problems', where $\textbf{D}$ in \cref{eq:RoundoffError} has real spectrum, one is interested in \cref{eq:Dahlquist} with $\lambda\leq0,\lambda\in\R$. In this case, the domain of stability of relevance is the line segment $-a\leq\lambda h\leq0$. We show now that the maximal $a=2m^2$ for integrators, such as Runge-Kutta methods, giving rise to \cref{eq:StabPoly}, via essentially the same simple proof as that of \cref{thm:Euler}. 
{
\paragraph{Comments on the original proof} The first result regarding stability in this case is due to \cite{Guillou1960}. The problem amounts to finding the degreee $m$ polynomial of the form $P(z)=1+z+\ldots$ which has absolute value $\leq 1$ for $-a<z<0$ with $a>0$ maximal. It is advisable to start with the Chebyshev polynomials of the first kind $T_m(x)$ which are known to have maximal first derivative among all polynomials of degree $m$ which remain between $1$ and $-1$ on the interval $-1<x<1$. 
Starting from any other polynomial in this class, applying the bijective transformation $\mu = \mu(z)$ which maps $-1<\mu<1\rightarrow -a<z<0$ and ensuring that $P(0)=P'(0)=1$ hence gives a value for $a$ smaller than that obtained by starting with the Chebyshev polynomial. The advantage of the following proof is that it 1) draws a clear connection to the general case of \cref{thm:Euler} and 2) takes a less constructive approach, only introducing the Chebyshev polynomial at the end, as the only polynomial which saturates the Markov Brothers' Inequality.
}

\begin{theorem}[Optimal Stability - Parabolic]\label{thm:Verlet}
Consider a consistent, explicit integrator for \cref{eq:ODE} which, when applied to \cref{eq:Dahlquist} with $\lambda \leq0$ generates a map of the form of \cref{eq:StabPoly}.  Then it attains the optimal stability interval $-2m^2\leq z<0$ iff $P(z)=T_m(1+z/m^2)$, where $T_m(x)$ is the $m^{\text{th}}$ Chebyshev polynomial of the first kind.

\noindent The optimal polynomial may be obtained via a sequence of $m$ Euler steps of different stepsizes such that $P(z)=\prod_{i=1}^m(1+z/\xi_i)$, where $\xi_i=m^2\left[1-\cos\left(\frac{\pi(2i-1)}{2m}\right)\right]$ \cite{Guillou1960}.
\end{theorem}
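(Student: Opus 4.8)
The plan is to mirror the structure of the proof of \cref{thm:Euler} as closely as possible, replacing the Bernstein inequality on the unit disc with the Markov Brothers' inequality on a real interval. First I would record the consistency constraints $P(0)=P'(0)=1$ exactly as in \cref{eq:ConsistencyRestricionRK}. Then I would introduce the affine bijection $\mu=\mu(z)=1+z/m^2$, which carries the candidate stability interval $-2m^2\le z<0$ onto the reference interval $-1\le\mu\le1$, and set $P(z)=\widetilde{P}(\mu)$ for a polynomial $\widetilde P$ of degree $\le m$. Differentiating and using the chain rule gives $P'(z)=\widetilde{P}'(\mu)/m^2$, so the consistency condition $P'(0)=1$ forces $\widetilde{P}'(\mu=1)=m^2$; this is the analogue of the identity $\widetilde P'(1)=m$ in the disc case, and it holds for \emph{every} admissible stability polynomial.

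Next I would invoke the Markov Brothers' inequality \cite[pp.~89--91]{CheneyBook}: for any polynomial $Q$ of degree $\le m$ with $\max_{-1\le x\le1}|Q(x)|\le 1$, one has $\max_{-1\le x\le1}|Q'(x)|\le m^2$, with equality attained only by $Q=\pm T_m$. Assuming the integrator is stable on $-2m^2\le z<0$ means precisely $|\widetilde P(\mu)|\le 1$ on $-1\le\mu<1$, hence (by continuity) on the closed interval. Combining this with the endpoint evaluation $\widetilde P'(1)=m^2$ — which already forces $\max_{-1\le\mu\le1}|\widetilde P'(\mu)|\ge m^2$ — I get the sandwiched chain
\[
m^2\le \max_{-1\le\mu\le1}|\widetilde P'(\mu)|\le m^2\max_{-1\le\mu\le1}|\widetilde P(\mu)|\le m^2,
\]
so equality holds throughout the Markov inequality, which by the equality case forces $\widetilde P(\mu)=\pm T_m(\mu)$. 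The sign is then pinned down by consistency: $\widetilde P'(1)=+m^2=T_m'(1)$ (and also $\widetilde P(0)=P(0)=1$, consistent with $T_m$ near $\mu=1$), so $\widetilde P=T_m$ and $P(z)=T_m(1+z/m^2)$. Conversely, $P(z)=T_m(1+z/m^2)$ does satisfy $|P(z)|\le1$ on $-2m^2\le z<0$ since $T_m$ is bounded by $1$ on $[-1,1]$, and a quick check of $P(0)=T_m(1)=1$, $P'(0)=T_m'(1)/m^2=m^2/m^2=1$ confirms consistency, giving the "iff".

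For the final sentence I would exhibit the product form: the zeros of $T_m(1+z/m^2)$ in $z$ correspond to the Chebyshev nodes $\cos\!\big(\tfrac{\pi(2i-1)}{2m}\big)$, $i=1,\dots,m$, of $T_m$ in the $\mu$ variable, so $1+z/\xi_i$ vanishes when $z=-\xi_i$ with $1-\xi_i/m^2=\cos\!\big(\tfrac{\pi(2i-1)}{2m}\big)$, i.e. $\xi_i=m^2\big[1-\cos\!\big(\tfrac{\pi(2i-1)}{2m}\big)\big]$; matching the leading coefficient and the normalization $P(0)=1$ then gives $P(z)=\prod_{i=1}^m(1+z/\xi_i)$, which is literally $m$ Euler substeps with stepsizes $h/\xi_i$. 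I expect the main obstacle to be not any single estimate but getting the equality-case and sign bookkeeping airtight: the Markov inequality's extremal characterization must be cited precisely, and one must be careful that the strict-versus-closed interval ($-2m^2\le z<0$ versus the closed version) does not cause trouble — here a continuity/limiting argument suffices since $\widetilde P$ is a polynomial. A minor secondary point is to confirm that the problem is genuinely symmetric enough that requiring $|P(z)|\le 1$ only on the negative real segment (not a two-sided interval) still lands us in the hypothesis of the Markov inequality after the change of variables, which it does because the image interval $[-1,1]$ is exactly the Markov reference interval.
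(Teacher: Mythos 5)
Your proposal is correct and takes essentially the same route as the paper: the affine change of variables $\mu=1+z/m^2$, the consistency identity $\widetilde{P}'(\mu=1)=m^2$, and the sandwiched equality in the Markov Brothers' inequality forcing $\widetilde{P}=\pm T_m$, with the sign fixed by the consistency conditions. You additionally spell out the converse and the Euler-substep factorization (which the paper only states with a citation); the only slip is the notation $\widetilde{P}(0)$ where you mean $\widetilde{P}(\mu=1)=P(0)=1$.
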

\begin{proof}
Let $z=\lambda h$ as before, and consider now the transformation $\mu=1+z/m^2$, which maps $[-2m^2,0]$ to $[-1,1]$, so that $P(z)=\widetilde{P}(\mu)$. Differentiating $\widetilde{P}$ as before and using $P'(z=0)=1$ from consistency implies that $\widetilde{P}'(\mu=1)=m^2$. We may then apply the Markov Brothers inequality \cite[p.91]{CheneyBook} (suggested on \cite[p.79]{Jeltsch1981})
\begin{equation}\label{eq:Markov}\tag{Markov Brothers}
 \max_{-1\leq\mu\leq1}\vert \widetilde{P}'(\mu)\vert\leq m^2 \max_{-1\leq\mu\leq1}\vert \widetilde{P}(\mu)\vert,
\end{equation}
and insert the requirement that $\vert \widetilde{P}(\mu)\vert\leq1$ for stability, and since we know that for $\mu=1$ the derivative attains the value $m^2$, $\max_{-1\leq \mu\leq 1}\vert \widetilde{P}'(\mu)\vert\geq m^2$. Continuing as before, one has equality $\max_{-1\leq \mu\leq1}\vert \widetilde{P}'(\mu)\vert= m^2 \max_{-1\leq \mu\leq1}\vert \widetilde{P}(\mu)\vert$, which holds only in the case of the Chebyshev polynomial $\widetilde{P}(\mu)=T_m(\mu)$, i.e. $P(z)=T_m(1+z/m^2)$ \cite{Shadrin2004}.
\end{proof}

\begin{remark}
The optimal stability $-2m^2\leq z\leq0$ for $T_m(1+z/m^2)$ holds even when allowing for a broader class of competitor integrators, including linear multi-step methods, not only those giving rise to \cref{eq:StabPoly}. As for \cref{thm:Euler}, this extension of the result of \cref{thm:Verlet} requires the use of elaborate complex analysis \cite[Sec. 4]{Jeltsch1981}.
\end{remark}

\begin{remark} 
In fact \cref{thm:Verlet} also proves an optimality result for integrators for the second-order ODE $\ddot{x}=f(x)$, with test problem $\ddot{x}=-\omega^2 x, \omega\in\R$. Our result thus recovers the result of \cite{Chawla1981}, of optimal stability interval $h\omega\leq2m$, in a simpler way. In this context, the candidate methods are not Runge-Kutta but `splitting' methods, whose stability may be treated by a polynomial of degree $m$ in $z=(h\omega)^2$, $P(z)=1-z/2+\mathcal{O}(z^2)$ \cite{Blanes2008}. From \cref{thm:Verlet}, with transformation $\mu=1-z/(2m^2)$ the optimal polynomial is then
$P(z)=T_m(1-z/(2m^2))$ \cite[Secs. 4.4-4.5]{BouRabee2018}. Note that for the special case $\ddot{x}=-Ax+g(x)$, to which one can apply more exotic (splitting) integrators, the Chebyshev polynomials are also optimal, in an appropriate sense \cite{Casas2023}.
\end{remark}

\section{Hyperbolic Theorem}\label{sec:hyperbolic}
For `hyperbolic problems', where $\textbf{D}$ in \cref{eq:RoundoffError} has pure imaginary spectrum, one is interested in \cref{eq:Dahlquist} with $\lambda\in i\R$. In this case, the domain of stability of relevance is the line segment $-ia\leq\lambda h\leq ia$. We show now that the optimal $a=(m-1)$ for integrators, such as Runge-Kutta methods, giving rise to \cref{eq:StabPoly}.

{
\paragraph{Comments on the original proof} The first result regarding stability in this case is due to \cite{VanDerHouwenBook}. The proof proceeds by establishing an upper bound for the stability interval of $a = (m-1)$, via reformulation of the problem as a ``minimax problem for a class of rational
functions'', proposing a polynomial form, and showing that this form attains the maximum; unfortunately the method is only applicable for $m$ odd. \cite{Vichnevetsky1983} proved that for $m$ even the maximal possible stability interval is also $a = (m-1)$. \cite{Kinnmark1984, Sonneveld1985} then presented equivalent formulae for general $m$ which attain the optimal limits. The proof below brings out the similarities (and differences) between the hyperbolic case and the previous parabolic and general cases and proves both the optimal limits and the polynomials which attain them simultaneously, offering a succint summary of these disparate results with a method which serves for both $m$ odd and $m$ even. We remark that the relevant references for \cref{sec:parabolic,sec:hyperbolic} are collected in \cite[p.31-37]{HNWBook2}, with the optimal polynomials for the hyperbolic case given as the solution to Exercise 3 on p.37. Note that the result is stated for polynomials of degree $m+1$, and so is equivalent to the result of \cref{thm:Hyperbolic}.
}

Before proceeding, we first require a simple lemma.
\begin{lemma}\label{lem:Q}
Consider a polynomial $P(z)=1+z+\alpha z^2+\mathcal{O}(z^3)$ of degree $m$ in $z$, for some unknown $\alpha\in\mathbb{R}$,
such that $|P(i\sqrt{y})|\leq 1, y\in[0,a], a>0$. Then $\alpha\geq 1/2$.
\begin{proof}
Consider $P(i\sqrt{y}),y\in\mathbb{R}$ so that
$$
Q(y)=P(i\sqrt{y})P(-i\sqrt{y})=\left|P(i\sqrt{y})\right|^2=1+(1-2\alpha)y+\mathcal{O}(y^2),
$$
is a real polynomial of degree $m$ in $y$. Necessarily $\alpha\geq 1/2$ as otherwise $Q(y)>1$ (and consequently $|P(i\sqrt{y})|>1$) for all positive $y$ less than some value sufficiently close to 0. 
\end{proof}
\end{lemma}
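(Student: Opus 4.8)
The plan is to study the squared modulus $Q(y):=|P(i\sqrt{y})|^2$ as a function of the real variable $y\ge 0$ and to read off the claim from its behaviour near $y=0$. First I would note that, since the stability polynomial $P$ has real coefficients, complex conjugation gives $\overline{P(i\sqrt{y})}=P(-i\sqrt{y})$ for $y\ge 0$, so that $Q(y)=P(i\sqrt{y})\,P(-i\sqrt{y})$. In this product the odd powers of $\sqrt{y}$ cancel (the two factors being complex conjugates), so $Q$ is a genuine polynomial in $y$; its degree is $m$ because its leading coefficient is the square of the leading coefficient of $P$.

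Next I would Taylor-expand $Q$ about $y=0$. Writing $P(z)=1+z+\alpha z^2+\mathcal{O}(z^3)$ and splitting $P(i\sqrt{y})$ into real and imaginary parts yields real part $1-\alpha y+\mathcal{O}(y^2)$ and imaginary part $\sqrt{y}+\mathcal{O}(y^{3/2})$, hence $Q(y)=(1-\alpha y)^2+y+\mathcal{O}(y^2)=1+(1-2\alpha)y+\mathcal{O}(y^2)$. Only the constant term ($=1$, from consistency $P(0)=1$) and the linear coefficient ($=1-2\alpha$, combining the contribution of the $z$-term of $P$ with that of $\alpha$) are needed.

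Finally I would invoke the hypothesis: $|P(i\sqrt{y})|\le 1$ on $[0,a]$ is exactly $Q(y)\le 1$ on $[0,a]$, while $Q(0)=1$, so the right-hand derivative of $Q$ at $0$ cannot be positive, i.e. $1-2\alpha\le 0$, which is $\alpha\ge 1/2$. Equivalently, were $\alpha<1/2$ the linear term $(1-2\alpha)y>0$ would dominate the $\mathcal{O}(y^2)$ remainder for all sufficiently small $y>0$, forcing $Q(y)>1$ and contradicting the assumed bound.

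I do not anticipate a real obstacle: once the right object $Q(y)=|P(i\sqrt{y})|^2$ is identified, the conclusion is a one-line expansion. The only points deserving a little care are (i) justifying that $Q$ is a polynomial in $y$ (rather than merely a function of $\sqrt{y}$), which relies on $P$ having real coefficients, and (ii) stating the last step as a one-sided derivative condition, since the bound is assumed only on the half-interval $[0,a]$.
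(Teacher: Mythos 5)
Your proof is correct and follows essentially the same route as the paper: form $Q(y)=P(i\sqrt{y})P(-i\sqrt{y})=|P(i\sqrt{y})|^2=1+(1-2\alpha)y+\mathcal{O}(y^2)$ and conclude that the linear coefficient cannot be positive, since otherwise $Q(y)>1$ for small $y>0$. Your added remarks (real coefficients of $P$ justifying the conjugate product, and the one-sided nature of the argument at $y=0$) are points the paper leaves implicit, but the substance is identical.
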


\begin{theorem}[Optimal Stability - Hyperbolic]\label{thm:Hyperbolic}
Consider a consistent, explicit integrator for \cref{eq:ODE} which, when applied to \cref{eq:Dahlquist} with $\lambda\in i\mathbb{R}$ generates a map of the form of \cref{eq:StabPoly}.  Then it attains the optimal stability interval $-i(m-1)\leq z\leq i(m-1)$ iff 
\begin{equation}\label{eq:mgeneral}
P(z)=i^{m-1}T_{m-1}\left(\frac{z}{i(m-1)}\right)+i^{m-2}\left(1+\frac{z^2}{(m-1)^2}\right)U_{m-2}\left(\frac{z}{i(m-1)}\right),
\end{equation}
 {where $T_k (U_k)$ are the Chebyshev polynomials of degree $k$ of the first (second) kind.}
\end{theorem}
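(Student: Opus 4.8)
The plan is to mirror the proofs of \cref{thm:Euler,thm:Verlet}: pass to a real interval, read off a condition at the origin from consistency, and close with one classical polynomial inequality whose equality case is rigid. Two new features must be accommodated. Because the constraint now concerns $|P(iy)|$ and not $P$ itself, I would work with the real polynomial $|P(iy)|^{2}$ (as in \cref{lem:Q}); and the argument will split according to the parity of $m$, since the imaginary part of $P(iy)$ carries the top degree when $m$ is even whereas the real part does so when $m$ is odd --- this is precisely where \cref{lem:Q} enters.

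Set $z=iy$, $y\in\mathbb{R}$, and write $P(iy)=A(y)+iB(y)$ with $A$ even and $B$ odd (real, since $P$ has real coefficients); consistency gives $A(0)=1$, $A'(0)=0$, $B'(0)=1$, and for parity reasons $\deg B\le m-1$ if $m$ is even while $\deg A\le m-1$ if $m$ is odd. Denote this distinguished component $p$. On any interval $[-a,a]$ where $|P(iy)|\le1$ one has $|p(y)|\le|P(iy)|\le1$, so $g(u):=p(\sqrt{u})^{2}$ is a genuine polynomial of degree $\le m-1$ in $u$ with $0\le g(u)\le1$ on $[0,a^{2}]$. Applying the Markov brothers' inequality --- rescaled to the interval $[0,a^{2}]$ and to degree $m-1$ --- to $h(u):=2g(u)-1$ at the endpoint $u=0$ gives $|g'(0)|=\tfrac12|h'(0)|\le (m-1)^{2}/a^{2}$. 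I then compute $g'(0)$ from the Taylor expansion of $p$: if $m$ is even, $p=B=y+\mathcal{O}(y^{3})$ gives $g(u)=u+\mathcal{O}(u^{2})$, so $g'(0)=1$ and $a\le m-1$; if $m$ is odd, $p=A=1-\alpha y^{2}+\mathcal{O}(y^{4})$ gives $g'(0)=-2\alpha$, and $\alpha\ge\tfrac12$ from \cref{lem:Q} again gives $a\le m-1$. This proves optimality of the interval $-i(m-1)\le z\le i(m-1)$.

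For the equality case $a=m-1$, equality must hold throughout the Markov step, so by the rigidity of the Markov brothers' inequality \cite{Shadrin2004} $h$ is $\pm T_{m-1}$ affinely mapped onto $[0,(m-1)^{2}]$ with $u=0$ sent to an endpoint; fixing the sign from $g(0)\in\{0,1\}$ and using $T_{m-1}(1-2w^{2})=(-1)^{m-1}T_{2(m-1)}(w)$ together with $T_{n}(w)^{2}=\tfrac12(1+T_{2n}(w))$, this collapses --- uniformly in the parity of $m$ --- to $g(u)=T_{m-1}(\sqrt{u}/(m-1))^{2}$, i.e.\ $p(y)=\pm T_{m-1}(y/(m-1))$, with the sign forced by $B'(0)=1$ (resp.\ $A''(0)<0$). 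The complementary component $q$ is then determined by the \emph{full} constraint: with $x=y/(m-1)$, from $A^{2}+B^{2}=|P(iy)|^{2}\le1$ and the identity $T_{m-1}(x)^{2}+(1-x^{2})U_{m-2}(x)^{2}=1$ one gets $q^{2}\le(1-x^{2})U_{m-2}(x)^{2}$, so $q$ vanishes at all $m$ zeros of $(1-x^{2})U_{m-2}(x)$; being of degree $\le m$ and of the right parity, $q$ must be a scalar multiple of $(1-x^{2})U_{m-2}(x)$, the scalar pinned by $A(0)=1$ (resp.\ $B'(0)=1$). Reassembling $A+iB$ and returning to $z$ reproduces \eqref{eq:mgeneral}. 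Conversely, for $P$ as in \eqref{eq:mgeneral}, a short computation with the same Chebyshev identity gives $P(0)=P'(0)=1$, $\deg P=m$, and $|P(iy)|^{2}=1-x^{2}(1-x^{2})U_{m-2}(x)^{2}$, which is $\le1$ for $|x|\le1$ and $>1$ for $|x|>1$; hence \eqref{eq:mgeneral} attains exactly the interval $-i(m-1)\le z\le i(m-1)$.

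The delicate step will be the equality case: it requires the correct rigidity statement for the Markov brothers' inequality at an endpoint, careful bookkeeping of the powers of $i$ and of the Chebyshev parity identities so that the collapsed form matches \eqref{eq:mgeneral} for both parities, and --- most importantly --- the observation that the hypothesis $|P(iy)|\le1$ (rather than just $|p|\le1$) is what forces the complementary component, via the count of its $m$ prescribed zeros against its degree.
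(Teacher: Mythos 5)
Your proposal is correct, and while it shares the paper's skeleton (split $P(iy)$ into its even and odd real components, exploit the fact that the lower-degree component is Chebyshev-extremal, then force the complementary component to vanish at the $m$ points where the identity $T_{m-1}^2+(1-x^2)U_{m-2}^2=1$ saturates, with \cref{lem:Q} supplying $\alpha\ge\tfrac12$ in the odd case), it differs from the paper's proof in a genuinely useful way. The paper treats the two parities with different tools: for $m=2k+1$ it applies the Markov argument of \cref{thm:Verlet} directly to $R$, but for $m=2k$ that route only yields $a\le m$, so it switches to an interior-point comparison of $\sqrt{y}\,I$ with $T_{2k-1}$, invoking \cite[Prop.~3.3]{Casas2023}. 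You instead apply the endpoint Markov brothers' inequality once, to $g(u)=p(\sqrt{u})^2$ viewed as a degree-$(m-1)$ polynomial in $u=y^2$ with $0\le g\le1$: the normalization $g'(0)=1$ (even $m$) or $g'(0)=-2\alpha$ with \cref{lem:Q} (odd $m$) gives $a\le m-1$ uniformly, and the classical rigidity of Markov (equality only for $\pm T_{m-1}$, with the endpoint placement of $u=0$ automatic) plus the double-angle identities recovers $p=\pm T_{m-1}(y/(m-1))$ in both parities --- so the external proposition and the case-dependent choice of which component to optimize first are no longer needed. Your determination of the complementary component by zero-counting against $(1-x^2)U_{m-2}$ (degree $\le m$, $m$ prescribed zeros from $A^2+B^2\le1$) also makes explicit the uniqueness that the paper's constructive ``proportional to'' step leaves implicit, and your computation $|P(iy)|^2=1-x^2(1-x^2)U_{m-2}(x)^2$ handles the converse cleanly; as a by-product you recover $\alpha=\tfrac12$ for odd $m$, matching the paper's final remark. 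What remains in your write-up is only the acknowledged routine bookkeeping (powers of $i$, signs fixed by $P(0)=P'(0)=1$) needed to assemble $A+iB$ into the single formula \eqref{eq:mgeneral}, which is the same assimilation step the paper performs between its equations \eqref{eq:modd} and \eqref{eq:meven}.
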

\begin{proof}
Let $z=\lambda h$ as before, and consider $P(z)=1+z+\alpha z^2+\mathcal{O}(z^3)$, for some unknown $\alpha\in\mathbb{R}$. One then has $P(i\sqrt{y}),y\in\mathbb{R}$ such that
$$
P(i\sqrt{y})=R+i\sqrt{y}I, \quad R(y)=1-\alpha y+\mathcal{O}(y^2), \quad I(y)=1+\mathcal{O}(y),
$$
where for $m=2k+1$ ($m=2k$), $k\in\mathbb{N}_0$, $R$ is of degree $k$ ($k$) in $y$ and $I$ is of degree 
$k$ ($k-1$). 
Defining $Q(y)=|P(i\sqrt{y})|^2=R^2+yI^2$ as in \cref{lem:Q}, it is clear that the maximal interval $0\leq y\leq a^2$ over which $Q(y)\leq 1$ is equal to or contained by those of $R(y)^2$ and $yI(y)$ separately.
We consider the cases of even $m=2k$ and odd $m=2k+1$ separately.

\paragraph{Case $m=2k+1$}
In this case we first optimise the interval $R(y)$ and then seek $\sqrt{y}I(y)$ such that the interval is preserved.
Using that $R(0)=1,R'(0)=-\alpha$ and the transformation $\mu=1-2y/a^2$ (which maps $0\leq y\leq a^2$ to $-1\leq \mu\leq 1$) to give $\widetilde{R}(\mu)=R(y)$, from \cref{thm:Verlet} one sees that the optimal polynomial is of the Chebyshev form and fulfills $\frac{\alpha a^2}{2}= k^2,$
and thus, since from \cref{lem:Q} one has $\alpha\geq 1/2$, one has the maximal $a^2=4k^2$ obtained for $R(y)=T_k\left(1-y/(2k^2)\right)$.

If we wish that $Q(y)\leq 1$, we require then that $\sqrt{y}I(y)=0$ when $R(y)^2=1$ (i.e. at the extrema $\xi_j=\cos(j\pi/k),j=0,1,\ldots,k$ of $T_k$), which is obtained by a polynomial of the form
$$
\sqrt{y}I(y)\propto \sqrt{y}\left(1-\frac{y}{4k^2}\right)\frac{d}{dy}T_k\left(1-\frac{y}{2k^2}\right),
$$
where one may use the fact that $T'_k(x)=kU_{k-1}(x)$, where $U_k$ are the Chebyshev polynomials of the second kind, and the condition $I(0)=1$ to determine the constant of proportionality, to give the optimal polynomial
$$
\sqrt{y}I(y)=\frac{\sqrt{y}}{k}\left(1-\frac{y}{4k^2}\right)U_{k-1}\left(1-\frac{y}{2k^2}\right),
$$
where $I(y)$ is indeed of degree $k$ in $y$. Taking $\sqrt{y}/2k\equiv\cos(\theta)$, one may then verify that $Q(y)=\cos^2(2k\theta)+\sin^2(\theta)\sin^2(2k\theta)\leq1, 0\leq y\leq 4k^2$ as required, using the properties of the Chebyshev polynomials $T_k(\cos(\theta))=\cos(k\theta),U_{k-1}(\cos(\theta))\sin(\theta)=\cos(k\theta)$.

Consequently, one may obtain stability for $-(m-1)i\leq z\leq (m-1)i$ where $m=2k+1$ for the polynomial (replacing $z=i\sqrt{y}$)  \cite[p. 257]{VanDerHouwenBook}
$$
P(z)=T_k\left(1+\frac{z^2}{2k^2}\right)+\frac{z}{k}\left(1+\frac{z^2}{4k^2}\right)U_{k-1}\left(1+\frac{z^2}{2k^2}\right).
$$
Note that by the same substitution $\sqrt{y}/2k\equiv\cos(\theta)$, one may express this polynomial as \cite{Kinnmark1984}
\begin{equation}\label{eq:modd}
P(z)=(-1)^k\left[T_{2k}\left(\frac{z}{2ki}\right)-i\left(1+\frac{z^2}{4k^2}\right)U_{2k-1}\left(\frac{z}{2ki}\right)\right].
\end{equation}

\paragraph{Case $m=2k$}
When $m$ is even one cannot proceed as before to design $I(y)$ since $I(y)$ may only be of degree $k-1$. Hence, we instead begin by optimising $\sqrt{y}I$, which is of degree $2k-1$ in $\sqrt{y}$, and then seek $R(y)$ such that the interval is preserved. We first apply the transformation $\mu=\sqrt{y}/(2k-1)$ (which maps $-(2k-1)\leq \sqrt{y}\leq (2k-1)$ to $-1\leq\mu\leq1$) to give $\widetilde{S}(\mu)=\sqrt{y}I(y)$, where $\widetilde{S}$ is degree $2k-1$ in $\mu$. One has from the condition $\sqrt{y}I(y)=\sqrt{y}+\mathcal{O}(y^{3/2})$ that $\widetilde{S}(\mu=0)=0,\widetilde{S}'(\mu=0)=(2k-1)$. We compare this to the polynomial $T_{2k-1}(\mu)$, and see that one also has $T_{2k-1}(0)=0, T'_{2k-1}(0)=(-1)^{k-1}(2k-1)$ (since $2k-1$ is odd) and thus 
$(-1)^{k-1}T_{2k-1}(\mu)-\widetilde{S}(\mu)$ possesses a double root at $\mu=0$. Then, since $\mu=0$ does not correspond to an extremum of $T_{2k-1}$, one may apply \cite[Prop. 3.3]{Casas2023} to show that $|\widetilde{S}(\mu)|>1$ on the interval  $-1\leq\mu\leq1$ unless it is equal to  $(-1)^{k-1}T_{2k-1}(\mu)$ and thus the optimal polynomial $\sqrt{y}I(y)=(-1)^{k-1}T_{2k-1}(\sqrt{y}/(2k-1))$.

We may now proceed in a similar fashion to the $m$ odd case above, requiring now that $R(y)=0$ at the extrema of $T_{2k-1}$, which may be achieved via a polynomial of the form
$$
R(y)\propto\left(1-\frac{y}{(2k-1)^2}\right)\frac{d}{d\sqrt{y}}T_{2k-1}\left(\frac{\sqrt{y}}{2k-1}\right),
$$
where one may use the fact that $T'_{2k-1}(x)=(2k-1)U_{2k-2}(x)$, and the condition $R(0)=1$ to determine the constant of proportionality, to give the optimal polynomial
\begin{equation}\label{eq:mevenR}
R(y)=(-1)^{k-1}\left(1-\frac{y}{(2k-1)^2}\right)U_{2k-2}\left(\frac{\sqrt{y}}{2k-1}\right),
\end{equation}
where $R(y)$ is indeed of degree $k$ in $y$.  Taking $\sqrt{y}/(2k-1)\equiv\cos(\theta)$, one may then verify that $Q(y)=\cos^2((2k-1)\theta)+\sin^2(\theta)\sin^2((2k-1)\theta)\leq1, 0\leq y\leq (2k-1)^2$ as required.
Consequently, one may obtain stability for $-(m-1)i\leq z\leq (m-1)i$ where $m=2k$ for the polynomial (replacing $z=i\sqrt{y}$) \cite[Thm. 3]{Kinnmark1984}
\begin{equation}\label{eq:meven}
P(z)=(-1)^{k-1}\left[\left(1+\frac{z^2}{(2k-1)^2}\right)U_{2k-2}\left(\frac{z}{i(2k-1)}\right)+iT_{2k-1}\left(\frac{z}{i(2k-1)}\right)\right].
\end{equation}
It is then possible to assimilate \cref{eq:modd,eq:meven} to the general form \cref{eq:mgeneral}.
\end{proof}

\begin{remark}
In distinction to the case for \cref{thm:Euler,thm:Verlet}, integrators giving rise to stability polynomials are no longer optimal if one allows other competitor integrators; specifically, if one considers multi-step integrators, it is possible to reach $a=m$ \cite[Sec. 5]{Jeltsch1981}.
\end{remark}

\begin{remark}
Note also that for $m$ odd, one has $\alpha=1/2$ as mentioned above, so that the integrator is in fact second order (although this is not the case for $m$ even, as a Taylor expansion of \cref{eq:mevenR} shows).
\end{remark}

\subsection*{Funding}
LS has been supported by Ministerio de Ciencia e Innovación (Spain) through project PID2022-136585NB-C21, MCIN/AEI/10.13039/501100011033/FEDER, UE.
\subsection*{Acknowledgments} The author would like to thank Fernando Casas and Jesús María Sanz-Serna for helpful comments and discussions.

\end{document}